\documentclass[12pt]{amsart}

\usepackage{amsmath, amssymb, amsthm}

\title[Thick Difference Sets]{Thick Difference Sets of Haar Null Compact Sets in Locally Compact Groups}

\author{Chuck Akemann}
\address{Department of Mathematics, University of California, Santa Barbara, CA 93106, USA}
\email{akemann@math.ucsb.edu}

\subjclass[2020]{22C05, 28C10}
\keywords{locally compact groups, Haar measure, difference sets, Steinhaus theorem, null sets}

\thanks{The author thanks AI models Super Grok, ChatGPT, and Gemini for very helpful discussions that contributed substantially to this work.  In particular, these models {\bf cooperatively} produced clean TeX output and found literature references at almost unbelievable speed.}

\newtheorem{theorem}{Theorem}
\newtheorem{lemma}{Lemma}

\begin{document}

\begin{abstract}
Let \(G\) be a non-discrete, locally compact group with Haar measure \(m\).
We prove that there exists a compact set \(K \subset G\) with \(m(K)=0\)
such that \(KK^{-1}\) contains a neighborhood of the identity.
Moreover, such a set may be constructed inside any prescribed
neighborhood of the identity.
\end{abstract}

\maketitle

\section{Introduction}

Let \(G\) be a non-discrete, locally compact group with left Haar measure \(m\).
The classical Steinhaus--Weil theorem asserts that if \(A \subset G\)
has positive measure, then \(AA^{-1}\) contains a neighborhood of the identity
(see, e.g., Hewitt and Ross \cite[Corollary~20.17]{HR}).

That the converse fails is well known in abelian groups (for example on the torus),
but the general nonabelian case does not appear to be explicitly recorded.

We prove that the converse fails in a strong form for all non-discrete locally compact groups.

\section{Main result}

\begin{theorem}\label{thm:main}
Let \(G\) be a non-discrete, locally compact group with Haar measure \(m\),
and let \(O\) be an open neighborhood of the identity.
Then there exists a compact set \(K \subset O\) such that
\[
m(K)=0 \quad \text{and} \quad KK^{-1}
\text{ contains a neighborhood of the identity.}
\]
\end{theorem}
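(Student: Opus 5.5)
The plan is to build two compact Haar null sets $K_1,K_2\subset O$ with $K_1K_2^{-1}\supseteq U$ for some neighborhood $U$ of $e$, and then take $K=K_1\cup K_2$. This suffices because $KK^{-1}\supseteq K_1K_2^{-1}$. Both sets will be obtained from one inductive Cantor-type scheme. The scheme approximates an arbitrary $x\in U$ \emph{from the left} by odd-stage factors and \emph{from the right} by even-stage factors. Working on both sides is what makes the argument independent of commutativity.

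\textbf{Construction.} Fix a symmetric compact neighborhood $V_1$ with $V_1^2\subset O$, and set $U=V_1$. Inductively choose symmetric compact neighborhoods $V_1\supset V_2\supset\cdots$ of $e$ with $V_{n+1}^2\subset V_n$, together with finite sets $F_n$. At odd stages $n$, pick $F_n\subset V_n$ finite with $V_n\subset F_nV_{n+1}$. At even stages, pick $F_n\subset V_n$ finite with $V_n\subset V_{n+1}F_n^{-1}$. Both are possible by compactness of $V_n$. Define the compact sets
\[
K_1=\bigcap_k F_1F_3\cdots F_{2k-1}V_{2k},\qquad K_2=\bigcap_k F_2F_4\cdots F_{2k}V_{2k+1}.
\]
The key freedom is the following. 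When $V_{2k}$ is chosen, the set $F_{2k-1}$ is already fixed, and the only requirement on $V_{2k}$ is that it be small. Hence we may additionally demand
\[
|F_1||F_3|\cdots|F_{2k-1}|\,m(V_{2k})<1/k,
\]
which is possible since $G$ is non-discrete, so $m(V)\to 0$ along small neighborhoods. The choice of $V_{2k}$ affects only the cardinality of $F_{2k}$, which enters the bound for $K_2$, not for $K_1$. Symmetrically we demand $|F_2|\cdots|F_{2k}|\,m(V_{2k+1})<1/k$. Left invariance of $m$ then gives $m(K_1)=m(K_2)=0$. Moreover $K_1,K_2\subset V_1V_2\cdots\subset V_1^2\subset O$.

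\textbf{Covering $U$.} Given $x\in V_1$, write $x=f_1x_1$ with $f_1\in F_1$ and $x_1\in V_2$. Then write $x_1=x_2g_2^{-1}$ with $g_2\in F_2$ and $x_2\in V_3$. Next write $x_2=f_3x_3$ with $x_3\in V_4$, and so on, alternating. After $2k$ steps this gives
\[
x=a_k\,x_{2k}\,b_k^{-1},\qquad a_k=f_1f_3\cdots f_{2k-1},\quad b_k=g_2g_4\cdots g_{2k},\quad x_{2k}\in V_{2k+1}.
\]
Here $a_k$ lies in the $k$-th compact set defining $K_1$, and likewise $b_k$ in that defining $K_2$. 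Since $G$ need not be metrizable, I will not try to prove that the partial products converge. Instead I take a cluster point $(a,b)$ of the net $(a_k,b_k)$ in the compact set $\overline{V_1}^2\times\overline{V_1}^2$. Since the defining sets are nested, $a\in K_1$ and $b\in K_2$. Passing to the limit in $x\in a_kV_{2k+1}b_k^{-1}$ gives $x\in a\big(\bigcap_k V_{2k+1}\big)b^{-1}$. This needs some care, since we can only guarantee that $x\in aV_nb^{-1}$ for every $n$. Each set $a\overline{V_n}b^{-1}$ is closed, so $x\in a\,\bigcap_n\overline{V_n}\,b^{-1}$.

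\textbf{Main obstacle.} The hardest step is the last one. It needs $\bigcap_n V_n=\{e\}$, which fails if $G$ is not first countable. I would handle this by replacing $G$ with $G/N$, where $N=\bigcap_nV_n$. Arranging each $V_{n+1}$ to be invariant enough, via the Kakutani--Kodaira construction, makes $N$ a compact normal subgroup with $G/N$ metrizable. Alternatively, one can absorb $N$ directly by building the $V_n$ so that $N\subset V_n$ for all $n$ and defining $K_2$ with a factor of $N$ included, so that $K_1NK_2^{-1}=K_1(K_2N)^{-1}$. The measure estimates are unaffected, because $m(F\cdots V_{2k})$ still tends to $0$. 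Then every $x\in U$ lies in $K_1K_2^{-1}\subset KK^{-1}$, which completes the proof of Theorem~\ref{thm:main}.
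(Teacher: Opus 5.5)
Your covering and cluster-point arguments are fine, but the measure estimate, which is the heart of the matter, rests on a wrong order of choices. You say $F_{2k-1}$ is already fixed when $V_{2k}$ is chosen. Yet your own requirement $V_{2k-1}\subset F_{2k-1}V_{2k}$ forces $F_{2k-1}$ to be chosen \emph{after} $V_{2k}$, and left invariance gives $m(V_{2k-1})\le |F_{2k-1}|\,m(V_{2k})$. So shrinking $V_{2k}$ inflates $|F_{2k-1}|$. Indeed, $|F_1||F_3|\cdots|F_{2k-1}|\,m(V_{2k})$ equals $|F_1|\cdots|F_{2k-3}|\,m(V_{2k-1})$ times the covering ratio $|F_{2k-1}|\,m(V_{2k})/m(V_{2k-1})\ge 1$. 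Nothing in your argument bounds that ratio, since it depends on how efficiently translates of $V_{2k}$ can cover $V_{2k-1}$, which you never address. Hence ``we may additionally demand $\ldots<1/k$'' is unjustified, and as written $m(K_1)=m(K_2)=0$ is not established. Your claim that absorbing $N$ leaves the estimates unaffected inherits the same problem.

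The repair is an index shift: do not count the last factor. Since $F_{2k-1}\subset V_{2k-1}$, you have $F_1F_3\cdots F_{2k-1}V_{2k}\subset F_1\cdots F_{2k-3}V_{2k-1}^2$, so $m(K_1)\le |F_1|\cdots|F_{2k-3}|\,m(V_{2k-1}^2)$. Here $F_{2k-3}$ depends only on $V_{2k-3}$ and $V_{2k-2}$, so it really is fixed when $V_{2k-1}$ is chosen. Non-discreteness then lets you impose $|F_1|\cdots|F_{2k-3}|\,m(V_{2k-1}^2)<1/k$ at that moment. Symmetrically, impose $|F_2|\cdots|F_{2k-2}|\,m(V_{2k}^2)<1/k$ when choosing $V_{2k}$.

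With this bookkeeping, your second treatment of $N=\bigcap_n V_n$ works with no Kakutani--Kodaira and no normality. First, $aNb^{-1}=a(bN)^{-1}$ for any subgroup $N$. Second, $K_2N\subset F_2\cdots F_{2k-2}V_{2k}V_{2k+1}V_{2k+2}\subset F_2\cdots F_{2k-2}V_{2k}^2$, so $K_2N$ is compact, null, and contained in $V_1^2\subset O$. The quotient route would instead need an open $\sigma$-compact subgroup, non-discreteness of $G/N$, and the pullback lemma.

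Once repaired, your proof is a genuinely different and much more elementary route than the paper's. The paper uses Yamabe's theorem to split into a Lie case (ball times Cantor set) and a profinite case (Kozma--Lev difference bases), glued by Weil's formula. You need only compactness, left invariance, and the existence of neighborhoods of arbitrarily small measure, with the alternating left/right approximation replacing commutativity.
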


\begin{lemma}[Pullback of null sets]
Let \(\rho : G \to H\) be a continuous open surjective homomorphism of locally compact groups
with compact kernel. If \(K_H \subset H\) is compact with Haar measure zero in \(H\),
then \(K := \rho^{-1}(K_H)\) is compact with Haar measure zero in \(G\).
\end{lemma}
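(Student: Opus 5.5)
We prove the two claims separately: compactness of \(K\) by a topological argument using the compact kernel, and nullity by transporting Haar measure from \(G\) to \(H\) along \(\rho\).

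\emph{Compactness.} Write \(N=\ker\rho\), a compact normal subgroup. Since \(\rho\) is a continuous open surjection, it induces a topological isomorphism \(G/N\cong H\). So it suffices to show that the quotient map \(\pi:G\to G/N\) is proper when \(N\) is compact. Let \(C\subset G/N\) be compact. Each point of \(C\) has a preimage \(g\in G\). Choose a compact neighborhood \(V\) of \(g\); then \(\pi(V)\) is a neighborhood of \(\pi(g)\), because \(\pi\) is open. Finitely many such sets \(\pi(V_1),\dots,\pi(V_n)\) cover \(C\). Hence
\[
\pi^{-1}(C)\subset \bigcup_i V_iN,
\]
and each \(V_iN\) is compact as the image of \(V_i\times N\) under multiplication. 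Since \(\pi^{-1}(C)\) is closed, it is compact. Applying this with \(C=K_H\) shows that \(K=\rho^{-1}(K_H)\) is compact.

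\emph{Measure zero.} Let \(m_G\) be a left Haar measure on \(G\) and define \(\mu(E)=m_G(\rho^{-1}(E))\) for Borel \(E\subset H\). By the properness just shown, \(\mu\) is finite on compact sets, so it is a Radon measure on \(H\). It is left invariant: given \(h\in H\), pick \(g\in G\) with \(\rho(g)=h\); then \(\rho^{-1}(hE)=g\rho^{-1}(E)\), and \(m_G\) is left invariant. It is nonzero: a nonempty open \(U\subset H\) has nonempty open preimage, which has positive \(m_G\)-measure. Regularity of \(\mu\) also needs checking. Inner regularity on open sets follows from inner regularity of \(m_G\), since compact subsets of \(\rho^{-1}(U)\) map to compact subsets of \(U\) and \(\rho^{-1}(\rho(\cdot))\) of them stays inside \(\rho^{-1}(U)\). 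Alternatively, one can use the Riesz representation theorem, defining \(\mu\) through the positive functional \(f\mapsto\int f\circ\rho\,dm_G\) on \(C_c(H)\), which is well defined by properness. By uniqueness of Haar measure, \(\mu=c\,m_H\) for some \(c>0\). Therefore
\[
m_G(K)=\mu(K_H)=c\,m_H(K_H)=0.
\]

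The main obstacle is the measure-zero step, specifically verifying that the pushforward \(\mu\) is a genuine Haar measure on \(H\), namely Radon, regular and nonzero, so that uniqueness applies. Using the functional \(f\mapsto\int f\circ\rho\,dm_G\) on \(C_c(H)\) avoids the regularity bookkeeping, because the Riesz theorem supplies a regular measure directly. That measure agrees with \(\mu\) on compact sets, which is all we need for the compact set \(K_H\).
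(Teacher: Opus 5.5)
Your proof is correct, but for the measure-zero part it takes a different route from the paper. The paper cites Weil's quotient integral formula, in its form for integrable functions so that it applies to the indicator of $K$. It writes $m_G(K)=\int_H m_N(K\cap\rho^{-1}(h))\,dm_H(h)$ and notes that the integrand is $m_N(\ker\rho)$ times the indicator of $K_H$. You instead push $m_G$ forward along $\rho$ and use uniqueness of Haar measure on $H$ to get $m_G(\rho^{-1}(E))=c\,m_H(E)$. This is exactly the special case of Weil's formula needed here, with $c=m_N(\ker\rho)$ under Weil's normalization.

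The paper's version is a one-line citation and gives the constant explicitly. Yours is self-contained and uses only uniqueness of Haar measure plus properness of $\rho$, which you actually prove. The paper's compactness sentence (compact fibers and compact $K_H$) silently relies on the quotient map by a compact subgroup being closed.

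Of your two options, the Riesz one is the cleaner. You can also drop the unproved claim that the Riesz measure $\nu$ agrees with $\mu$ on compact sets. If $f\in C_c(H)$ and $f\ge 1$ on $K_H$, then $f\circ\rho\ge 1$ on $K$, so $m_G(K)\le\int f\circ\rho\,dm_G$. Taking the infimum over such $f$ gives $m_G(K)\le\nu(K_H)=c\,m_H(K_H)=0$.
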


\begin{proof}
Compactness of \(K\) follows because the fibers are compact and \(K_H\) is compact.
By Weil's integration formula (see, e.g., Hewitt and Ross \cite[\S 28]{HR}),
\[
m_G(K) = \int_H m_N(K \cap \rho^{-1}(h))\,dm_H(h).
\]
Because \(K = \rho^{-1}(K_H)\), the intersection \(K \cap \rho^{-1}(h)\) is the entire fiber \(\rho^{-1}(h)\) (which has measure \(m_N(\ker\rho)\)) when \(h \in K_H\), and is empty otherwise. The integrand therefore vanishes whenever \(h \notin K_H\), and since \(m_H(K_H)=0\) we conclude \(m_G(K)=0\).
\end{proof}

\section{Lie case}

\begin{lemma}
Let \(H\) be a Lie group of positive dimension and let \(W\)
be a neighborhood of the identity. Then there exists a compact set
\(K_H \subset W\) with \(m_H(K_H)=0\) such that \(K_HK_H^{-1}\)
contains a neighborhood of the identity.
\end{lemma}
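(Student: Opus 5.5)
Work in exponential coordinates of the second kind. Let \(n=\dim H\ge 1\) and fix a basis \(X_1,\dots,X_n\) of the Lie algebra \(\mathfrak h\). The map
\[
\Phi(t_1,\dots,t_n)=\exp(t_1X_1)\exp(t_2X_2)\cdots\exp(t_nX_n)
\]
is a diffeomorphism from a small cube \((-\varepsilon,\varepsilon)^n\) onto a neighborhood of \(e\). Shrinking \(\varepsilon\), we may assume its image lies in \(W\). In these coordinates Haar measure has a smooth positive density, so \(\Phi\) sends Lebesgue-null sets to \(m_H\)-null sets.

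The idea is to reduce to one-parameter subgroups, where the classical abelian construction is available. Let \(C\subset[-\delta,\delta]\) be a compact Lebesgue-null set with \(C-C\supset[-\eta,\eta]\). One standard choice is a scaled middle-thirds Cantor set: for \(C_0\) the Cantor set, \(C_0-C_0=[-1,1]\). For each \(i\) put \(C_i=\{\exp(tX_i):t\in C\}\); each \(C_i\) is compact. The natural first candidate is \(K_H=C_1C_2\cdots C_n\). It is compact, being a continuous image of \(C^n\), and it lies in \(\Phi((-\delta,\delta)^n)\subset W\). It is also null, because it equals \(\Phi(C^n)\) and \(C^n\) is Lebesgue-null. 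However, \(K_HK_H^{-1}=C_1\cdots C_n C_n^{-1}\cdots C_1^{-1}\) has the wrong ordering: the inner factor \(C_nC_n^{-1}\supset \exp([-\eta,\eta]X_n)\) is fine, but the remaining factors do not telescope in a nonabelian group.

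I will use the following fix instead. It exploits the fact that a set whose difference set is thick need not be small in dimension, only in measure. Define
\[
K_H=\Phi(C\times[-\delta,\delta]^{n-1}) .
\]
This set is compact, and it is null since \(C\times[-\delta,\delta]^{n-1}\) is Lebesgue-null. Then
\[
K_HK_H^{-1}\supset \exp(cX_1)\,g\,g^{-1}\exp(-c'X_1)
\]
for any common tail \(g=\exp(t_2X_2)\cdots\exp(t_nX_n)\), but this only yields the one-parameter piece. So a symmetric design is better. Take \(K_H=\Phi(C\times[-\delta,\delta]^{n-1})\cup \Phi(D)\), where the union handles each coordinate separately. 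Then one still needs products to fill a full neighborhood, and this is the main obstacle.

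The cleanest route I would actually try is a dimension count via a submersion, avoiding telescoping altogether. Consider the map \(F:K\times K\to H\), \(F(x,y)=xy^{-1}\). Choose a compact null set \(A\subset(-\delta,\delta)^n\) with \(A-A\) containing a neighborhood of \(0\) in \(\mathbb R^n\); for instance \(A=C^n\). Set \(K_H=\psi(A)\) with \(\psi(v)=\exp(\sum v_iX_i)\), so \(K_H\) is compact and null. We want to show that \(\{\psi(a)\psi(b)^{-1}:a,b\in A\}\) contains a neighborhood of \(e\). Write \(\psi(a)\psi(b)^{-1}=\psi(a-b+R(a,b))\), where by Baker--Campbell--Hausdorff \(R=O(|a||b|)\) is smooth. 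Since the structure of \(C^n\) is robust, I would instead choose \(A\) as a Cantor-type set \(E^n\) with \(E-E\) containing an interval \emph{stably}: Newhouse thickness \(\tau(E)>1\) gives that \(E-E'\) contains an interval for every \(E'\) that is a \(C^1\)-small perturbation of \(E\). The gap lemma is stable under \(C^1\) perturbations. Applying this coordinatewise to the map \((a,b)\mapsto a-b+R(a,b)\), which is a \(C^1\)-small perturbation of subtraction on a small cube, shows that the image contains a neighborhood of \(0\). Since \(E\) can be chosen thick yet Lebesgue-null (thickness \(>1\) together with measure zero is possible for Cantor sets with rapidly shrinking relative gaps), this yields \(K_H\subset W\), compact, null, with \(K_HK_H^{-1}\) containing a neighborhood of \(e\). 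The step I expect to be hardest is this perturbative gap-lemma argument, which must handle the nonlinear BCH error term in all coordinates simultaneously. A fallback is a degree or open-mapping argument applied to the continuous surjection of Cantor products.
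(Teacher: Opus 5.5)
There is a genuine gap at exactly the step you flag as hardest. The Newhouse gap lemma is one-dimensional: it controls $E\cap f(E)$ when $f$ is a $C^1$-small perturbation of a translation of $\mathbb{R}$. Solving $\psi(a)\psi(b)^{-1}=h$ with $a,b\in E^n$ is equivalent to showing $E^n\cap L_h(E^n)\neq\emptyset$, where $L_h(b)=\psi^{-1}(h\,\psi(b))$. This map is $C^1$-close to a translation of $\mathbb{R}^n$. However, the bracket terms in $R(a,b)$ make it mix all the coordinates, so it does not preserve the product structure of $E^n$. The problem therefore does not decouple into $n$ one-dimensional problems, and ``applying the gap lemma coordinatewise'' proves nothing. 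You would need a genuinely $n$-dimensional robust-intersection theorem, which you have not supplied. The degree/open-mapping fallback also has nothing to act on, since $E^n\times E^n$ is totally disconnected. (Separately, your parenthetical is backwards: Cantor sets whose relative gaps shrink rapidly have \emph{positive} measure. A fixed-ratio middle-$\alpha$ set with $\alpha<1/3$ is what gives thickness $>1$ together with measure zero.)

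The missing idea is one you stated yourself and then abandoned: $K_H$ only has to be small in measure. So let the Cantor set occupy a single coordinate, and put that coordinate in the \emph{rightmost} factor. Take $C=\delta C_0\subset[0,\delta]$ with $\delta<\varepsilon$, where $C_0$ is the middle-thirds set, so $C-C=[-\delta,\delta]$. Set $K_H=\Phi([-\delta,\delta]^{n-1}\times C)$. For $x=\Phi(t_1,\dots,t_{n-1},c)$ and $y=\Phi(0,\dots,0,c')=\exp(c'X_n)$, both in $K_H$, you get exactly $xy^{-1}=\Phi(t_1,\dots,t_{n-1},c-c')$. Hence $K_HK_H^{-1}\supset\Phi([-\delta,\delta]^n)$, which is a neighborhood of $e$ --- no error term and no gap lemma. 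This is precisely the paper's proof, written there as $\Theta(v,t)=\exp(v)\exp(tX_1)$ with $v$ in a closed ball of a complement $V$ of $\mathbb{R}X_1$. Your second attempt failed only because $C$ sat in the \emph{first} factor and you compared elements with a common tail. With $C$ first, the same exact cancellation occurs in $K_H^{-1}K_H$ rather than in $K_HK_H^{-1}$.
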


\begin{proof}
Let \(\mathfrak{h}\) be the Lie algebra of \(H\) and fix a decomposition
\[
\mathfrak{h} = V \oplus \mathbb{R} X_1.
\]
Choose \(\delta>0\) sufficiently small so that the map
\[
\Theta(v,t) = \exp(v)\exp(tX_1)
\]
is a diffeomorphism from
\[
B_V(0,3\delta) \times (-3\delta,3\delta)
\]
onto an open neighborhood \(U \subset W\) of the identity (see, e.g., Hall \cite[Ch.~2]{Hall}).

Let \(C \subset [-\delta,\delta]\) be a compact null set (e.g., a scaled ternary Cantor set) such that
\[
C - C \supset [-\delta,\delta].
\]
Set
\[
E := \overline{B_V(0,\delta)} \times C, \quad K_H := \Theta(E).
\]
Since \(E\) has Lebesgue measure zero in \(V \times \mathbb{R}\) and \(\Theta\) is smooth (hence locally Lipschitz),
\(m_H(K_H)=0\).

For \(v \in \overline{B_V(0,\delta)}\) and \(t,s \in C\),
\[
\Theta(v,t)\Theta(0,s)^{-1}
= \exp(v)\exp((t-s)X_1).
\]
Thus \(K_HK_H^{-1}\) contains
\[
\Theta(\overline{B_V(0,\delta)} \times [-\delta,\delta]),
\]
whose interior contains a neighborhood of the identity in \(H\).
\end{proof}

\section{Profinite case}

\begin{lemma}[Finite difference bases]
There exists an absolute constant \(A = 4/\sqrt{3}\) such that for every finite group \(F\)
there exists \(T \subset F\) with \(TT^{-1} = F\) and
\[
|T| \le A \sqrt{|F|}.
\]
\end{lemma}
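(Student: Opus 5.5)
We need a set $T$ with $TT^{-1}=F$ and $|T|\le \frac{4}{\sqrt3}\sqrt{|F|}$. The plan is a greedy covering argument: a random construction with a logarithmic loss would not reach an absolute constant, so I would build $T$ deterministically in two pieces.

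Write $n=|F|$. The guiding picture is the cyclic case, where an interval $\{0,\dots,k-1\}$ together with an arithmetic progression of step $k$ gives a difference basis of size about $2\sqrt n$. In a general group, a subgroup can play the role of the interval, which suggests the following route.

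\emph{Subgroup case.} If $F$ has a subgroup $S$ with $|S|\approx\sqrt n$, set $T = S\cup R$, where $R$ is a set of representatives of the right cosets $Sg$, with the identity $e$ taken as one of them. Then $TT^{-1}\supseteq SR^{-1}\cup RS$. The set $RS^{-1}\cdot$ is not directly all of $F$, so instead use $T=S\cdot\{e\}\cup R$ and compute $T T^{-1}\supseteq S R^{-1}$. Because $R^{-1}$ is a system of left coset representatives $g^{-1}S$, the product $SR^{-1}$ covers only double cosets, which is not enough. The cleaner choice is $T=S\cup R^{-1}$ with $R$ right coset representatives: then $R^{-1}$ runs over left coset representatives, so $TT^{-1}\supseteq (R^{-1})^{-1}\cdot$ … this bookkeeping has to be fixed carefully. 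In the end one wants $\bigcup_{r} rS = F$ realized as a product $T_1T_2^{-1}$ with $T_1,T_2\subseteq T$. That holds for $T=R\cup S$, since $RS^{-1}=RS=F$ when $R$ is a set of left coset representatives. The size is then $|R|+|S| = n/|S|+|S|$, which is at most about $2\sqrt n$ when $|S|\approx\sqrt n$.

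\emph{No suitable subgroup.} In general such a subgroup need not exist; groups of prime order are the obvious example. So I would use the following bound, which holds for every $n$: there is a set $T$ of size at most $\sqrt{2n}+O(1)$ with $TT^{-1}=F$. The argument is greedy.
\begin{itemize}
\item Pick $T$ of size $k$ whose quotient set $TT^{-1}$ is as large as possible.
\item If some $g$ is not yet in $TT^{-1}$, count the ordered pairs $(t,t')\in T\times T$.
\item Adding one element enlarges $TT^{-1}$ by up to $2k$ new elements.
\end{itemize}
A random or averaging choice guarantees that a new element $x$ adds, on average, about $2k\cdot(\text{uncovered fraction})$ elements. The standard averaging argument shows that a greedy $T$ of size $O(\sqrt{n\log n})$ suffices, and removing the log requires a structured construction.

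The main obstacle is exactly this: getting an absolute constant without structure. The most robust strategy is the bound of Kozma and Lev, which proves $|T|\le \frac{4}{\sqrt3}\sqrt{|F|}$ for every finite group. It goes by induction via a normal subgroup $N$ and the quotient $F/N$: combine difference bases $T_N$ and $T_Q$ into $T=T_N\cdot\tilde T_Q\cup\cdots$. The base case is simple groups, handled through the classification or through large cyclic or abelian subgroups, the cyclic case being covered by Singer-type or interval-plus-progression difference bases. I would cite Kozma--Lev for the inductive constant $4/\sqrt3$ rather than reprove it. The downstream application only needs some absolute constant $A$, so if the sharp constant proves hard to pin down, a weaker explicit constant would still do.
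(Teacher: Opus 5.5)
Your proposal ends up doing what the paper does: the paper's proof of this lemma is just a citation of Kozma and Lev (Theorem~1), so the subgroup construction $T=R\cup S$ and the greedy discussion are not needed. Drop the claim that a greedy argument gives $|T|\le\sqrt{2n}+O(1)$ for every $n$, which you never justify (as you note yourself, averaging only yields $O(\sqrt{n\log n})$), and don't present the product $\tilde T_Q T_N$ as the Kozma--Lev induction, since that product alone only yields $|T|\le A^2\sqrt{|F|}$.
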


\begin{proof}
This is the main result of Kozma and Lev \cite[Theorem~1]{KL}.
\end{proof}

\begin{lemma}
Let \(G\) be an infinite compact totally disconnected group.
Let \(O\) be a neighborhood of the identity.
Then there exists a compact set \(K \subset O\) with \(m(K)=0\)
such that \(KK^{-1}\) contains a neighborhood of the identity.
\end{lemma}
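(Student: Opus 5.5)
We work with a chain of open normal subgroups. Since $G$ is compact and totally disconnected, it is profinite, so it has a neighborhood base at the identity of open normal subgroups. Choose open normal subgroups
\[
N_0 \supsetneq N_1 \supsetneq N_2 \supsetneq \cdots
\]
with $N_0 \subset O$ and $\bigcap_k N_k$ contained in every neighborhood of the identity. Because $G$ is infinite, we may pass to a subsequence and assume each index $n_k := [N_{k-1}:N_k]$ is as large as we like; we will need $\sum_k A/\sqrt{n_k} < \infty$ and in fact $\prod_k A/\sqrt{n_k} = 0$. For this, take $N_k$ so deep inside $N_{k-1}$ that $n_k \ge 4^k A^2$. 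This is possible because $N_{k-1}$ is an infinite open subgroup and the indices of the open normal subgroups inside it are unbounded.

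\textbf{The construction.} For each $k$, apply the Finite difference bases lemma to the finite group $F_k = N_{k-1}/N_k$. This gives a set of coset representatives $T_k \subset N_{k-1}$ with $T_kT_k^{-1}N_k \supseteq N_{k-1}$ and $|T_k| \le A\sqrt{n_k}$. Here the lemma's identity $TT^{-1}=F$ holds in the quotient; lifting it gives containment modulo $N_k$. Define
\[
K_k := T_1 T_2 \cdots T_k N_k, \qquad K := \bigcap_k K_k .
\]
The sets $K_k$ are decreasing, since $T_{k+1}N_{k+1} \subset N_k$, and each $K_k$ is a finite union of cosets of $N_k$, hence compact. So $K$ is compact, and $K \subset N_0 \subset O$. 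Its measure satisfies
\[
m(K) \le m(K_k) \le \prod_{i\le k}|T_i|\, m(N_k) = m(N_0)\prod_{i\le k}\frac{|T_i|}{n_i} \le m(N_0)\prod_{i \le k}\frac{A}{\sqrt{n_i}} \to 0 ,
\]
so $m(K)=0$.

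\textbf{The difference set.} We show $KK^{-1} \supseteq N_0$. The proposal is to do this by a compactness argument. Fix $g \in N_0$. For each $k$, let
\[
P_k := \{(x,y)\in K_k\times K_k : xy^{-1}=g\}.
\]
Each $P_k$ is a closed subset of the compact set $K_1 \times K_1$, and the $P_k$ are decreasing, so it suffices to show every $P_k$ is nonempty.

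This nonemptiness is the main obstacle, because of noncommutativity. We need to write $g = t_1\cdots t_k\, u\,(s_1\cdots s_k\, u')^{-1}$ with $t_i, s_i \in T_i$ and $u,u'\in N_k$. We proceed inductively, peeling off one layer at a time. Suppose we have $g \in aN_{j-1}b^{-1}$ with $a = t_1\cdots t_{j-1}$ and $b = s_1\cdots s_{j-1}$. Then $a^{-1}gb \in N_{j-1}$, and normality of $N_{j-1}$ is used here. By the choice of $T_j$, write $a^{-1}gb = t_j s_j^{-1} n$ with $n \in N_j$. Then
\[
g = a\,t_j\,s_j^{-1}\,n\,b^{-1} = (a t_j)\,\bigl(s_j^{-1} n s_j\bigr)\,(b s_j)^{-1},
\]
and $s_j^{-1} n s_j \in N_j$ because $N_j$ is normal. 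This gives $g \in (at_j)N_j(bs_j)^{-1}$. Absorbing the $N_j$ factor into the right-hand element shows $P_j \neq \emptyset$.

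By the finite intersection property, $\bigcap_k P_k \ne \emptyset$, which gives $g \in KK^{-1}$. Hence $KK^{-1} \supseteq N_0$, which is an open neighborhood of the identity, and $K$ has all the required properties.

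The points I would double-check are the following:
\begin{itemize}
\item that the index of open normal subgroups can be forced to grow, which holds because an infinite profinite group has open normal subgroups of arbitrarily large index inside any open subgroup;
\item that the estimate $m(N_k) = m(N_0)/\prod_{i \le k} n_i$ holds;
\item that the normality steps in the induction are valid.
\end{itemize}
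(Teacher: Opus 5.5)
Your proof is correct and is essentially the paper's argument. You use the same sets, namely products of lifted Kozma--Lev difference bases of successive quotients of rapidly growing index, so that $m(K_k)\to 0$, and the same nested-compactness step; the only cosmetic difference is that you verify $g\in K_kK_k^{-1}$ by peeling off layers directly in $G$, whereas the paper proves $S_kS_k^{-1}=N_1/N_{k+1}$ in the finite quotients and pulls back.

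One small correction: the requirement that $\bigcap_k N_k$ lie in every neighborhood of the identity cannot be met when $G$ is not metrizable. You never use it, so simply drop it and keep only the inductive choice $n_k\ge 4^kA^2$.
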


\begin{proof}
Since \(G\) is compact and totally disconnected,
it is profinite and admits a neighborhood basis at the identity
consisting of open normal subgroups (see, e.g., Hofmann and Morris \cite[Theorem~1.34]{HM}).

We construct a descending sequence of open normal subgroups
\[
N_1 \supset N_2 \supset \cdots,\quad N_1 \subset O
\]
inductively so that the successive quotients \(Q_k := N_k/N_{k+1}\) satisfy
\[
\frac{A}{\sqrt{|Q_k|}} \le \frac12
\]
(where \(A=4/\sqrt{3}\)). This is possible because \(G\) is infinite profinite:
given \(N_k\), the open normal subgroups contained in \(N_k\) still form a neighborhood basis of the identity, so we can skip intermediate subgroups to ensure the index \([N_k:N_{k+1}]=|Q_k|\) is arbitrarily large.

Let \(H_k = N_1/N_{k+1}\) and let \(\Phi_k : N_1 \to H_k\) be the canonical quotient map.
Let \(\pi_k : H_{k+1} \to H_k\) be the natural projection between quotients, so that \(\Phi_k = \pi_k \circ \Phi_{k+1}\).

We construct subsets \(S_k \subset H_k\) recursively so that
\[
S_k S_k^{-1} = H_k, \quad \pi_k(S_{k+1}) = S_k,
\]
and writing
\[
\eta_k := \frac{|S_k|}{|H_k|},
\]
we have
\[
\eta_{k+1} \le \eta_k \frac{A}{\sqrt{|Q_{k+1}|}} \le \frac{\eta_k}{2}.
\]

First choose \(S_1 \subset H_1\) such that
\[
S_1 S_1^{-1} = H_1 \quad\text{and}\quad |S_1| \le A \sqrt{|H_1|}.
\]

Suppose \(S_k\) has been constructed. Choose
\(T_{k+1} \subset Q_{k+1}\) such that
\[
T_{k+1} T_{k+1}^{-1} = Q_{k+1} \quad\text{and}\quad |T_{k+1}| \le A \sqrt{|Q_{k+1}|}.
\]

For each \(x \in S_k\), choose a lift \(\tilde{x} \in H_{k+1}\)
with \(\pi_k(\tilde{x}) = x\), and let \(\tilde{S}_k\) denote
the set of these lifts. Define
\[
S_{k+1} := \tilde{S}_k T_{k+1}.
\]

Because \(T_{k+1} T_{k+1}^{-1} = Q_{k+1}\) and \(Q_{k+1}\) is normal in \(H_{k+1}\), we have
\[
S_{k+1} S_{k+1}^{-1} = \tilde{S}_k T_{k+1} T_{k+1}^{-1} \tilde{S}_k^{-1} = \tilde{S}_k Q_{k+1} \tilde{S}_k^{-1} = \tilde{S}_k \tilde{S}_k^{-1} Q_{k+1}.
\]
Since \(\pi_k(\tilde{S}_k \tilde{S}_k^{-1}) = S_k S_k^{-1} = H_k\), the product covers all of \(H_k\), and multiplying by the kernel \(Q_{k+1}\) yields all of \(H_{k+1}\). Thus \(S_{k+1} S_{k+1}^{-1} = H_{k+1}\).

Moreover,
\[
\eta_{k+1} = \frac{|S_{k+1}|}{|H_{k+1}|} \le \frac{|S_k||T_{k+1}|}{|H_k||Q_{k+1}|} \le \eta_k \frac{A}{\sqrt{|Q_{k+1}|}} \le \frac{\eta_k}{2}.
\]

Thus \(\eta_k \le \eta_1 / 2^{k-1} \to 0\).

Define
\[
K_k := \Phi_k^{-1}(S_k) \subset N_1.
\]
Because \(\Phi_k = \pi_k \circ \Phi_{k+1}\) and \(\pi_k(S_{k+1}) = S_k\), it follows that \(K_{k+1} \subset K_k\). Each \(K_k\) is compact, and
\[
m(K_k) = \eta_k\, m(N_1) \to 0.
\]

Let
\[
K = \bigcap_{k=1}^\infty K_k.
\]
Then \(K\) is compact, \(K \subset N_1 \subset O\), and \(m(K)=0\).

Let \(g \in N_1\). For each \(k\), since \(H_k = S_k S_k^{-1}\),
there exist \(a_k, b_k \in S_k\) such that
\[
\Phi_k(g) = a_k b_k^{-1}.
\]
Thus \(K_k \cap gK_k \neq \emptyset\) for all \(k\).

Since these sets are nested and compact, their intersection is nonempty,
so \(g \in KK^{-1}\).

Hence \(N_1 \subset KK^{-1}\).
\end{proof}

\section{Proof of the main theorem}

\begin{proof}
Let \(G\) be a non-discrete, locally compact group and let \(O\)
be an open neighborhood of the identity.

By Yamabe's theorem (see, e.g., Montgomery and Zippin \cite[Section~4.6]{MZ}),
there exists an open subgroup \(G'\) of \(G\) and a compact normal subgroup \(N\subset O\cap G'\) of \(G'\)
such that \(H = G'/N\) is a Lie group. Let \(\rho : G' \to H\) be the canonical quotient map.

\textbf{Case 1: \(\dim H > 0\).}
Choose an open neighborhood \(W\) of the identity in \(H\) such that \(\rho^{-1}(W) \subset O \cap G'\).
By the Lie case, there exists a compact set \(K_H \subset W\) with \(m_H(K_H)=0\) such that \(K_H K_H^{-1}\) contains a neighborhood \(V\) of the identity in \(H\).
Set \(K := \rho^{-1}(K_H)\). Then \(K\) is compact and \(K \subset O\).
By the pullback lemma, \(m_G(K)=0\).
Moreover, \(KK^{-1} = \rho^{-1}(K_H K_H^{-1}) \supset \rho^{-1}(V)\), which is open in \(G'\) and hence in \(G\).

\textbf{Case 2: \(\dim H = 0\).}
Then \(H\) is discrete, so \(N\) is open in \(G'\) (hence open in \(G\)) and compact.
Since \(G\) is non-discrete, \(N\) is infinite.
By the structure theory of compact groups (Hofmann–Morris \cite[Chapter 9]{HM}), either \(N\) is totally disconnected or there exists a closed normal subgroup \(M \subset N\) such that \(N/M\) is a compact Lie group of positive dimension.

- If \(N\) is totally disconnected, apply the profinite-case lemma inside \(N\) to obtain the desired \(K \subset N \subset O\).
  Since \(N\) is open, any neighborhood in \(N\) is a neighborhood in \(G\).
- If \(N/M\) has positive dimension, repeat Case 1 inside \(N\): the resulting set \(K \subset N\) is compact, null in \(N\) (hence in \(G\)), and \(KK^{-1}\) contains a neighborhood of the identity in \(N\) (hence in \(G\)).

This completes the proof.
\end{proof}

\end{document}